%
%
%
%
%
%

\documentclass[oneside,11pt]{amsart}
\usepackage{amscd}
\usepackage{amssymb}
\usepackage{graphicx}
\usepackage[usenames,dvipsnames]{color}



\addtolength{\oddsidemargin}{-.5in}
\addtolength{\evensidemargin}{-.5in} \addtolength{\textwidth}{1in}

\allowdisplaybreaks[1]

\usepackage[all]{xy}

\makeatletter
\def\section{\@startsection{section}{1}%
  \z@{.7\linespacing\@plus\linespacing}{.5\linespacing}%
  {\normalfont\centering}}
\makeatother

\newtheorem{thm}{Theorem}[section]
\newtheorem{cor}[thm]{Corollary}
\newtheorem{lem}[thm]{Lemma}

\theoremstyle{definition}
\newtheorem{defn}[thm]{Definition}
\newtheorem{rem}[thm]{Remark}

\numberwithin{equation}{section}

\newcommand{\thmref}[1]{Theorem~\ref{#1}}

\newcommand{\lemref}[1]{Lemma~\ref{#1}}


\newcommand{\C}{\hbox{${\mathbb C}$}}

\newcommand{\Z}{\hbox{${\mathbb Z}$}}

\newcommand{\A}{\hbox{${\mathcal A}$}}

\newcommand{\al}{\alpha}

\newcommand{\h}{\hbox{${\mathfrak h}$}}

\newcommand{\be}{\begin{equation}}

\newcommand{\bmm}{\left[\begin{array}}
\newcommand{\emm}{\end{array}\right]}

\newcommand{\sltu}{\mathfrak{sl}_2(\C)}
\newcommand{\slhat}{\widehat{\mathfrak{sl}}_2(\C)}

\newcommand{\bZ}{\textbf{Z}}
\newcommand{\half}{\frac{1}{2}}
\newcommand{\nhalf}{\frac{-1}{2}}
\newcommand{\fourth}{\frac{1}{4}}

\newcommand{\xhalf}[1]{\frac{#1}{2}}
\newcommand{\nxhalf}[1]{\frac{-#1}{2}}
\newcommand{\tr}{\text{Tr}}
\newcommand{\dirac}{\delta\left(\frac{w}{z}\right)}

\newcommand{\Zalg}{\mathcal{Z}}

\newcommand{\com}[2]{\left[#1,#2\right]}
\newcommand{\gencom}[2]{\left[\left[#1,#2\right]\right]}
\newcommand{\anticom}[2]{\left\{#1,#2\right\}}
\newcommand{\lrpar}[1]{\left(#1\right)}
\newcommand{\lrbrace}[1]{\left\{#1\right\}}
\newcommand{\lrbrack}[1]{\left[#1\right]}
\newcommand{\ub}[1]{\underbar{$#1$}}
\newcommand{\ds}{\displaystyle}


\begin{document}


\title{Realization of $\slhat$ at the Critical Level}

\author[J. Dunbar]{Jonathan Dunbar}
	\address{Dept of Mathematics	\\
	Spring Hill College	\\	
	Mobile, AL 36608}
\author[N. Jing]{Naihuan Jing}
	\address{Dept of Mathematics \\
	North Carolina State University\\
	Raleigh, NC 27695-8205}
\author[K. C. Misra]{Kailash C. Misra}
	\address{Dept of Mathematics \\
	North Carolina State University\\
	Raleigh, NC 27695-8205}


\begin{abstract}
An explicit realization of the affine Lie algebra $\slhat$ at the critical level is
constructed using a mixture of bosons and parafermions. Subsequently
a representation of the associated Lepowsky-Wilson $\bZ$-algebra is given on
a space of the tensor product of bosonic fields and certain semi-infinite wedge products.
\end{abstract}



\maketitle

\section{\bf{Introduction}}\label{Introduction}

Affine Lie algebras form an important class of Kac-Moody Lie algebras (cf. \cite{Kac90}). The first nontrivial construction of a representation of $\slhat$ was discovered in terms of certain differential operators  \cite{LepWil78} called vertex operators acting on a Fock Space. This explicit realization of affine Lie algebras by vertex operators initiated a flurry of activities which led to many important connections of affine Lie algebra representations with other areas of mathematics and physics. It also led to the discovery of new algebraic structures such as vertex operator algebras (cf. \cite{B, FLM, LepLi}).

In  \cite{LepWil81, LepWil84} Lepowsky and Wilson introduced a certain nonassociative algebra called a $\bZ$-algebra associated with an integrable highest-weight module of an affine Lie algebra and a suitable infinite-dimensional Heisenberg subalgebra. In particular, the $\bZ$-algebra is generated by certain operators called $\bZ$-operators centralizing the action of the Heisenberg subalgebra on the representation space,  hence acting on a special subspace called the vacuum space. It was shown in \cite{LepWil84} that the representation space is irreducible if and only if the vacuum space is irreducible as a $\bZ$-algebra module. In \cite{LepPrimc}, Lepowsky and Primc  used the $\bZ$-algebra theory to give explicit constructions of all integrable highest weight $\slhat$-modules at positive integral level in the homogeneous gradation. Such $\bZ$-algebras are essentially equivalent to parafermion algebras as explained in \cite{DL}. The construction was extended to that of the quantum affine algebra $U_q(\slhat)$ in \cite{Jing96}.

On the other hand, Wakimoto \cite{Wak} gave a free field realization of the affine Lie algebra $\slhat$ at an arbitrary level. Subsequently, this construction was extended to other affine Lie algebras  by Feigin and Frenkel \cite{FeigFrenk, FeigFrenk90b}. Geometric realizations of the Wakimoto representation at the critical level were given in \cite{FG}. For a generic weight $\lambda$ at the critical level $-2$ the graded dimension of the simple $\slhat$-module $L(\lambda)$ is known (cf. \cite{KK}, \cite{FF}, \cite{Ku}, \cite{Mal}).

Motivated by explicit constructions in \cite{LepPrimc, Jing96} at the positive levels, we present an explicit realization of the affine Lie algebra
$\slhat$ at the critical level $-2$. The corresponding representation involves an interesting Clifford-type (or parafermion) algebra and is  simpler  than the Wakimoto module. It would be interesting to compare our new construction with the level $2$ $\slhat$-modules constructed in  \cite{Jing96}. Finally, we give a representation of the associated Lepowsky-Wilson $\bZ$-algebra and determine its graded dimension. It is worthwhile to note that this $\bZ$-algebra realization at the critical level $-2$ is different from the general construction of \cite{LepPrimc} for positive levels.

\section{\bf{Preliminaries}}\label{Prelim}
The simple Lie algebra $\sltu$ is generated by $X,\ Y,$ and
$H$,  where
$$X=\left[\begin{array}{cc}0&1\\ 0&0\end{array}\right],\
Y=\left[\begin{array}{cc}0&0\\ 1&0\end{array}\right],\
H=\left[\begin{array}{cc}1&0\\ 0&-1\end{array}\right].$$

The roots of $\sltu$ with Cartan subalgebra $\C{H}$ are $\lrbrace{\al,-\al}$ with $\al(H) = 2$.

The affine Lie algebra $\slhat = \sltu\otimes\C[t,t^{-1}]\oplus\C c\oplus\C d,$ where $c$ is the central element and $d = 1\otimes t\frac{d}{dt}$ is the degree derivation.
For any $a\in\sltu$ and $m\in\Z$, we denote $a(m) = a\otimes t^m\in\slhat$. For any $a,b\in\sltu$ and $m,n\in\Z$, the commutation relations in $\slhat$ are given by:

\begin{equation}\label{componentCom}
\begin{cases}
\com{a(m)}{b(n)} = \com ab(m+n)+\tr(ab)mc\delta_{m,-n},\\	
\com{c}{\slhat} = 0,	\hspace{.5in}	\com{d}{a(m)}=ma(m).
\end{cases}
\end{equation}

The affine Lie algebra $\slhat$ is generated by $h_0, h_1, e_0, e_1, f_0, f_1$ and $d$ where:
$$\begin{array}{ll}
e_0 = Y(1),	&	e_1 = X(0),	\\
f_0 = X(-1),	&	f_1 = Y(0)	,	\\
h_0 = -H(0) + c,	&	h_1 = H(0).
\end{array}$$

Let $\ds\h=\big(\bigoplus\limits_{n\in\Z} {\mathbb C}H(n)\big) \oplus \C c \oplus \C d$ and
$\ds\h_0={\mathbb C}H(0)\oplus \C c \oplus \C d$ be the affine Cartan subalgebra.
Then, for $\lambda\in\h_0^*$, $\lambda(c)$ is called the level of $\lambda$, and for $\slhat$, $\lambda(c) = -2$ is the critical level. We consider the Heisenberg subalgebra $\ds\h'=\big(\bigoplus\limits_{n\ne0} {\mathbb C}H(n)\big)\oplus \C c$ of $\slhat$ and set  $\ds\h'^\pm=\bigoplus\limits_{n>0} {\mathbb C}H(\pm n)$.

It follows by  \eqref{componentCom} that $\com{H(m)}{H(n)} = 2mc\delta_{m,-n}$. Throughout this paper, we use the notations and settings of \cite{FLM} and \cite{LepLi}.  We define the action of the Heisenberg subalgebra $\h'$ on the symmetric algebra $S(\h'^-)$ as follows:

\begin{align*}
c\cdot v		&=	-2v,	\\
H(-m)\cdot v	&=	H(-m)v,	\\
H(m) \cdot v	&=	-4m\partial_{H(-m)}(v),
\end{align*}
for all $v\in S(\h'^-)$, $m>0$, where $\partial_{H(-m)}$ denotes the formal partial derivative with respect to $H(-m)$.

Let $a(z) = \sum\limits_{m\in\Z}a(m)z^{-m}$, for all $a\in\sltu$. We call $a(m)$ the $m^{\text{th}}$ \textit{component} of $a(z)$. Let $\delta(z) = \sum\limits_{n\in\Z}z^n$ be the formal delta function.  Then using \eqref{componentCom} we have:

\begin{align}
	\com{H(z)}{X(w)}	&=	2X(w)\dirac, \label{hxbracket}	\\
	\com{H(z)}{Y(w)}	&=	-2Y(w)\dirac, \label{hybracket}	\\
	\com{X(z)}{Y(w)}	&=	H(w)\dirac + cw\partial_w\dirac.\label{xybracket}
\end{align}

Now let $V$ be any $\slhat$-module induced from that of the Heisenberg subalgebra $\mathfrak h'$, we define the exponential operators on $V$:
$$E^\pm_+(z) = \exp\lrpar{\mp\sum\limits_{n>0}\frac{H(-n)}{2n}z^n}\hspace{.2in} \text{and}
\hspace{.2in}E^\pm_-(z) = \exp\lrpar{\pm\sum\limits_{n>0}\frac{H(n)}{2n}z^{-n}}.$$

The following identities are straightforward.

\begin{lem}$($cf. \cite{LepWil84}$)$
\begin{align*}
E^+_{\pm}(z)E^-_{\pm}(z) &=	1,	\\
E^\pm_-(z)E^\pm_+(w)	&=	E^\pm_+(w)E^\pm_-(z)\lrpar{1-\frac{w}{z}}^{-1}	,\\
E^+_-(z)E^-_+(w)	&=	E^-_+(w)E^+_-(z)\lrpar{1-\frac wz},	\\
E^+_\pm(z)E^+_\pm(w)	&=	E^+_\pm(w)E^+_\pm(z),	\\
E^-_\pm(z)E^+_\pm(w)	&=	E^+_\pm(w)E^-_\pm(z),	\\
E^-_\pm(z)E^-_\pm(w)	&=	E^-_\pm(w)E^-_\pm(z),	\\
\partial_z\left(E^\pm_+(z)E^\pm_-(z)\right)	
		&=	E^\pm_+(z)\lrpar{\mp\sum_{n\ne0}\frac{H(n)}{2}z^{-n-1}}E^\pm_-(z).\end{align*}
\end{lem}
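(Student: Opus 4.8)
The plan is to write each operator $E^\pm_\pm(z)$ as $\exp(A)$ with $A$ linear in the modes $H(n)$, and to exploit that by \eqref{componentCom} the bracket $\com{H(m)}{H(n)} = 2mc\,\delta_{m,-n}$ is a scalar (a multiple of $c$, which acts here as $-2$). Hence for any two such exponents $A,B$ the commutator $\com AB$ is a c-number, in particular central, so the Weyl form of the Baker--Campbell--Hausdorff identity
$$e^A e^B = e^B e^A\, e^{\com AB}$$
applies and reduces every relation to a single scalar computation. Throughout, the subscript $+$ corresponds to the creation modes $H(-n)$ $(n>0)$ and the subscript $-$ to the annihilation modes $H(n)$ $(n>0)$.

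First I would dispose of the relations whose two factors involve modes of the same sign. In the first identity the exponents of $E^+_\pm(z)$ and $E^-_\pm(z)$ are exact negatives of one another and commute, so the product is $\exp(0)=1$. In the fourth, fifth, and sixth identities both factors are built from a single family of modes (all $H(-n)$, or all $H(n)$), whose mutual brackets vanish because $\delta_{m,-n}=0$ when $m,n$ share a sign; thus $\com AB=0$ and the operators simply commute.

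The substance lies in the second and third identities, where one factor uses creation modes and the other annihilation modes. For the second identity with the $+$ superscript I would compute the commutator of the exponents of $E^+_-(z)$ and $E^+_+(w)$; using $\com{H(m)}{H(-n)} = 2mc\,\delta_{m,n}$ and $c=-2$ it collapses to the scalar $\sum_{m>0}\frac1m\lrpar{\frac wz}^m = -\log\lrpar{1-\frac wz}$, and the BCH identity then produces precisely the factor $\lrpar{1-\frac wz}^{-1}$. The third identity has the same structure but, because of the differing sign in the exponent of $E^-_+$, yields the opposite scalar commutator and hence the factor $\lrpar{1-\frac wz}$; the $-$-superscript cases are identical, with the signs tracked by the $\pm$.

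For the final, differentiation identity I would apply the product rule to $\partial_z\lrpar{E^\pm_+(z)E^\pm_-(z)}$. Since each exponent commutes with itself, differentiating the exponential brings down the derivative of the exponent, giving a creation-mode prefactor for $E^\pm_+$ and an annihilation-mode prefactor for $E^\pm_-$. The creation-mode prefactor commutes with $E^\pm_+(z)$ (same-sign modes), so it may be moved to its right, after which the two contributions combine into the single sum $\mp\sum_{n\ne0}\frac{H(n)}{2}z^{-n-1}$ sandwiched between $E^\pm_+(z)$ and $E^\pm_-(z)$, as claimed. The only genuine hazard is keeping the four sign choices consistent---the $\pm$ superscript, the $\pm$ subscript, the sign of the mode, and the sign contributed by $c=-2$---since the BCH step itself is routine precisely because every relevant commutator is scalar.
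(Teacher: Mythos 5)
Your proposal is correct and follows essentially the same route as the paper: the authors write out only the last identity, using exactly your product-rule argument, and refer the first six to Lepowsky--Wilson, where the standard proof is the Weyl/BCH computation with central commutator that you describe (your scalar $\sum_{m>0}\frac1m\lrpar{\frac wz}^m=-\log\lrpar{1-\frac wz}$, after inserting $c=-2$, is the computation in question). The sign bookkeeping you flag as the only hazard is handled consistently throughout.
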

\begin{proof}
The proof of all properties except the last one are similar to the corresponding results in \cite{LepWil84}.  We include the proof of the last property below.

\begin{align*}
\partial_z\lrpar{E^\pm_+(z)E^\pm_-(z)}	
	&=	E^\pm_+(z)\partial_z\lrbrack{\mp\sum\limits_{n>0}\frac{H(-n)}{2n} z^n}E^\pm_-(z) + E^\pm_+(z)\partial_z\lrbrack{\pm\sum\limits_{n>0}\frac{H(n)}{2n} z^{-n}}E^\pm_-(z)	\\
	&=	E^\pm_+(z)\lrbrack{\mp\sum\limits_{n>0}\frac{H(-n)}{2} z^{n-1}}E^\pm_-(z) + E^\pm_+(z)\lrbrack{\mp\sum\limits_{n>0}\frac{H(n)}{2} z^{-n-1}}E^\pm_-(z)	\\
	&=	E^\pm_+(z)\lrbrack{\mp\sum\limits_{n\ne0}\frac{H(n)}{2} z^{-n-1}}E^\pm_-(z)
\end{align*}
\end{proof}

Now we define the $\bZ$-operators on $V$ as follows:
\begin{align*}
		\bZ^+(z)	=	\bZ(\al,z)	=	E^-_+(z)X(z)E^-_-(z),	\\
		\bZ^-	(z)	=	\bZ(-\al,z)	=	E^+_+(z)Y(z)E^+_-(z).
\end{align*}

Let $\bZ^\pm(z) = \sum\limits_{m\in\Z}\bZ^\pm(m)z^{-m}$. The $\bZ$-algebra $\Zalg$ is generated by the homogeneous components $\bZ^\pm(m)$, for all $m\in\Z$. As shown in \cite{LepWil84}, $V$ has the following tensor product decomposition:
$$V \simeq S(\h'^-)\otimes\Omega(V),$$
where $\Omega(V)=\lrbrace{v\in V\vert\h'^+\cdot v = 0}$ is called the vacuum space. The following lemma can be proved using standard techniques, as in \cite{LepWil84}.

\begin{lem}\label{H commutes with Z}$($cf. \cite{LepWil84}$)$
	The $\bZ$-operators $\bZ^\pm(m)$ commutes with the $H(n)$'s and hence with the Heisenberg algebra $\h'$.
\end{lem}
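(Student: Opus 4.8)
The plan is to prove the statement at the level of generating functions, i.e. to establish $\com{H(n)}{\bZ^\pm(z)} = 0$ for every $n\ne 0$; since $c$ is central this immediately yields commutation with all of $\h'$, which is the ``hence'' part. Working with the full current $\bZ^\pm(z)$ rather than with the individual components $\bZ^\pm(m)$ keeps the bookkeeping clean, and the component statement then follows by extracting the coefficient of $z^{-m}$.

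First I would record the two ingredients supplied by \eqref{componentCom} and \eqref{hxbracket}--\eqref{hybracket}, with $c$ acting as $-2$. On the one hand $\com{H(n)}{H(-m)} = 2nc\,\delta_{n,m} = -4n\,\delta_{n,m}$ is a scalar, so for any of the exponential factors one has $\com{H(n)}{e^{A}} = \com{H(n)}{A}\,e^{A}$, because $\com{H(n)}{A}$ is central. On the other hand, passing \eqref{hxbracket} and \eqref{hybracket} to components gives $\com{H(n)}{X(z)} = 2z^{n}X(z)$ and $\com{H(n)}{Y(z)} = -2z^{n}Y(z)$.

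Next I would apply the Leibniz rule for the commutator across the three factors of $\bZ^+(z) = E^-_+(z)X(z)E^-_-(z)$. The crucial point is that for $n>0$ the operator $H(n)$ commutes with every $H(m)$, $m>0$, so $\com{H(n)}{E^-_-(z)} = 0$, while $\com{H(n)}{E^-_+(z)} = -2z^{n}E^-_+(z)$ from the scalar bracket above; for $n<0$ the two roles are interchanged, with $\com{H(n)}{E^-_+(z)} = 0$ and $\com{H(n)}{E^-_-(z)} = -2z^{n}E^-_-(z)$. In either case the term coming from $\com{H(n)}{X(z)} = 2z^{n}X(z)$ is exactly cancelled by the term coming from the surviving exponential, so $\com{H(n)}{\bZ^+(z)} = 0$. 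The identical computation with \eqref{hybracket} in place of \eqref{hxbracket}, and the exponentials $E^+_\pm(z)$ carrying the opposite sign, disposes of $\bZ^-(z)$.

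The only place requiring care is the precise matching of the normalizing constants $\tfrac{1}{2n}$ in the exponentials against the factor $c=-2$ in $\com{H(n)}{H(-m)}$: these are calibrated exactly so that differentiating the exponential produces the scalar $-2z^{n}$, which is what cancels the $\pm 2z^{n}$ arising from the bracket with $X(z)$ (resp. $Y(z)$). I do not expect any genuine obstacle beyond this sign and constant bookkeeping.
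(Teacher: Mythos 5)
Your proof is correct and is exactly the standard computation the paper itself invokes (it gives no details, merely citing Lepowsky--Wilson); the Leibniz-rule cancellation between $\com{H(n)}{X(z)}=2z^nX(z)$ and $\com{H(n)}{E^-_\pm(z)}=cz^nE^-_\pm(z)$ is the whole argument. You also correctly flag the one subtle point, namely that the $\tfrac{1}{2n}$ normalization of the exponentials is calibrated to $c=-2$, so the cancellation is special to the critical level.
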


As a consequence of \lemref{H commutes with Z}, we have
	\begin{align*}
	\com{E^\pm_+(z)}{\bZ^+(w)}=0=\com{E^\pm_-(z)}{\bZ^+(w)}\ 	\\	
	\com{E^\pm_+(z)}{\bZ^-(w)}=0=\com{E^\pm_-(z)}{\bZ^-(w)}.\end{align*}

By \lemref{H commutes with Z}, the $\bZ$-operators centralize the action of the Heisenberg subalgebra and hence act on the vacuum space $\Omega(V)$. Thus $\Omega(V)$ is a $\Zalg$-module. As shown in \cite{LepWil84}, $V$ is irreducible if and only if $\Omega(V)$ is irreducible as a $\Zalg$-module.

For $\phi_1,\ \phi_2 = \pm\al$, we define the generalized commutator bracket as follows:
\begin{align}\label{GenComDef}
&\gencom{\bZ(\phi_1,z)}{\bZ(\phi_2,w)} \\
&	\qquad\qquad = \bZ(\phi_1,z)\bZ(\phi_2,w)\lrpar{1-\frac wz}^{\frac{\lrpar{\phi_1,\phi_2}}{-2}}
	- \bZ(\phi_2,w)\bZ(\phi_1,z)\lrpar{1-\frac zw}^{\frac{\lrpar{\phi_1,\phi_2}}{-2}}.\nonumber
\end{align}

The following Lemma follows easily using \lemref{H commutes with Z}, as in \cite{LepWil84}.

\begin{lem}\label{Zalg relations}$($cf. \cite{LepWil84}$)$ The following relations hold:
\begin{equation}
\begin{cases}
	\gencom{\bZ^\pm(z)}{\bZ^\pm(w)}	&= 0,		\\
	\gencom{\bZ^+(z)}{\bZ^-(w)}		&= H(0)\dirac - 2w\partial_w\dirac.	
\end{cases}
\end{equation}\label{Z-relations}
\end{lem}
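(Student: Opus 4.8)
The plan is to compute each ordered product $\bZ(\phi_1,z)\bZ(\phi_2,w)$ and $\bZ(\phi_2,w)\bZ(\phi_1,z)$ in closed form, to exhibit each one as an explicit scalar factor $\lrpar{1-\frac wz}^{\pm1}$ (resp.\ $\lrpar{1-\frac zw}^{\pm1}$) times a single operator core, and then to observe that the powers built into \eqnref{GenComDef} are chosen precisely so that these scalar factors cancel. The only tools needed are the swapping rules of the first Lemma together with the contraction formulas coming from $\com{H(m)}{X(n)}=2X(m+n)$ and $\com{H(m)}{Y(n)}=-2Y(m+n)$: for $|w|<|z|$ these allow one to move $X(z)$ or $Y(z)$ past any of the four exponentials at the cost of an explicit power of $\lrpar{1-\frac wz}$ (for instance $E^-_+(w)X(z)=\lrpar{1-\frac wz}^{-1}X(z)E^-_+(w)$). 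First I would apply these, in the order forced by the positions of the exponentials, to bring each product into a normally ordered form with all creation exponentials on the left and all annihilation exponentials on the right of the remaining $X$- and $Y$-fields.

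For the $\pm\pm$ relations this gives $\bZ^+(z)\bZ^+(w)=\lrpar{1-\frac wz}\mathcal{N}(z,w)$, where $\mathcal{N}(z,w)=E^-_+(z)E^-_+(w)X(z)X(w)E^-_-(z)E^-_-(w)$. The core $\mathcal{N}(z,w)$ is symmetric under $z\leftrightarrow w$, because the creation exponentials commute among themselves, the annihilation exponentials commute among themselves, and $X(z)X(w)=X(w)X(z)$ since $\tr(X^2)=0$ forces $\com{X(m)}{X(n)}=0$. Hence $\bZ^+(w)\bZ^+(z)=\lrpar{1-\frac zw}\mathcal{N}(z,w)$, and since $(\al,\al)=2$ gives $\frac{(\al,\al)}{-2}=-1$, the bracket \eqnref{GenComDef} multiplies the two products by $\lrpar{1-\frac wz}^{-1}$ and $\lrpar{1-\frac zw}^{-1}$, leaving $\mathcal{N}(z,w)-\mathcal{N}(z,w)=0$. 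The $--$ case is identical with $Y$ in place of $X$ (again $\tr(Y^2)=0$).

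For the mixed relation the same bookkeeping yields $\bZ^+(z)\bZ^-(w)=\lrpar{1-\frac wz}^{-1}\mathcal{E}_L\,X(z)Y(w)\,\mathcal{E}_R$ and $\bZ^-(w)\bZ^+(z)=\lrpar{1-\frac zw}^{-1}\mathcal{E}_L\,Y(w)X(z)\,\mathcal{E}_R$, where $\mathcal{E}_L=E^-_+(z)E^+_+(w)$ collects the creation exponentials and $\mathcal{E}_R=E^+_-(w)E^-_-(z)$ the annihilation ones. Now $\frac{(\al,-\al)}{-2}=+1$, so \eqnref{GenComDef} cancels the prefactors and the whole expression collapses to $\mathcal{E}_L\com{X(z)}{Y(w)}\mathcal{E}_R$; substituting \eqnref{xybracket} at the critical level $c=-2$ turns this into $\mathcal{E}_L\lrpar{H(w)\dirac-2w\partial_w\dirac}\mathcal{E}_R$. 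The endgame is then to simplify this using the delta-function substitution identities $g(z)\dirac=g(w)\dirac$ and $f(z)\partial_w\dirac=f(w)\partial_w\dirac+f'(w)\dirac$, the fact that the four exponentials collapse pairwise to the identity on the diagonal so that $\left.\mathcal{E}_L\mathcal{E}_R\right|_{z=w}=1$, and the diagonal derivative $\left.\partial_z(\mathcal{E}_L\mathcal{E}_R)\right|_{z=w}=\frac{1}{2w}\lrpar{H(w)-H(0)}$, for which the last identity of the first Lemma (which expresses the $z$-derivative of a product of same-argument exponentials through the non-zero modes of $H$) is exactly the right tool.

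The main obstacle is precisely this last reduction. Since $\mathcal{E}_L$ and $\mathcal{E}_R$ do not commute with $H(w)$, one cannot merely set $z=w$ and collapse the exponentials while leaving $H(w)$ untouched: the first term returns the full $H(w)\dirac$, retaining all non-zero modes of $H$. The key is that pushing $\mathcal{E}_L\mathcal{E}_R$ through $\partial_w\dirac$ produces, via the diagonal derivative above, a correction that cancels exactly those modes, namely $-2w\,\mathcal{E}_L\mathcal{E}_R\,\partial_w\dirac=-2w\partial_w\dirac-\lrpar{H(w)-H(0)}\dirac$, so the two contributions combine to $H(0)\dirac-2w\partial_w\dirac$. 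Verifying that this cancellation is exact---with the factor $\frac{1}{2w}$ and the value $c=-2$ conspiring correctly---is the crux, and it is here that both the normalization of the $\bZ$-operators and the critical level are essential. That only the zero mode $H(0)$ survives is the expected outcome, for by \lemref{H commutes with Z} the right-hand side must commute with every $H(n)$, and $H(0)$ is the unique mode of $H(w)$ with that property.
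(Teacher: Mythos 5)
Your proof is correct and carries out precisely the standard Lepowsky--Wilson computation that the paper itself only cites (the paper gives no details for this lemma, stating that it ``follows easily \dots as in \cite{LepWil84}''). The bookkeeping of the $\lrpar{1-\frac wz}^{\pm1}$ factors, the reduction of the mixed bracket to $\mathcal{E}_L\com{X(z)}{Y(w)}\mathcal{E}_R$, and the diagonal-derivative identity $\left.\partial_z(\mathcal{E}_L\mathcal{E}_R)\right|_{z=w}=\frac{1}{2w}\lrpar{H(w)-H(0)}$ that trades $H(w)$ for $H(0)$ all check out.
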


\section{\bf{Representation of $\slhat$ at the Critical Level}}\label{Rep}
		
We define the Clifford-like algebra $\A$ generated by $\lrbrace{A(m), A^*(m)\vert m\in\Z+\half}$ subject to the relations:
\begin{equation}\label{ARelation1}
	\anticom{A(m)}{A^*(n)} = -\lrpar{m^2-\fourth}\delta_{m+n,0},
\end{equation}
\begin{equation}\label{ARelation2}
	\anticom{A(m)}{A(n)} = 0 = \anticom{A^*(m)}{A^*(n)},
\end{equation}
where $\anticom{\ }{\ }$ denotes the anti-commutator bracket.

For $a \in \{A, A^*\}$ we define the formal generating series
	$\ds a(z) = \sum_{n\in\Z+\half} a(n)z^{-n-\half}$.
	

\begin{defn}\label{Normal Ordering}
For $m,n\in\Z+\half$, $a, b \in \{A, A^*\}$ we define the normally ordered products
$$:\!a(m)b(n)\!:\ = \left\{\begin{array}{rr}a(m)b(n), & m< 0 \\ -b(n)a(m), & m>0
\end{array}\right..$$

The normally ordered products for the generating series  are given by

$$:\!\!a(z)b(w)\!\!: = \hspace{-.1in}\sum\limits_{m,n\in\Z+\half} \hspace{-.1in} :\!\!a(m)b(n)\!\!:z^{-m-\half} w^{-n-\half}$$
for $a, b \in \{A, A^*\}$ and we have $:\!\!a(z)b(w)\!\!: = -:\!\!b(w)a(z)\!\!:$. Therefore, $:\!a(w)a(w)\!:\ = 0$ for $a \in \{A, A^*\}$.

\end{defn}

For $a, b \in \{A, A^*\}$ we define the contraction function $\underbrace{a(z)b(w)}\! = a(z)b(w) - :\!a(z)b(w)\!:\!.$
Then by \eqref{ARelation1}, and \eqref{ARelation2}  we have
\begin{equation}\label{aastarcontract}
\begin{cases}
	\underbrace{A(z)A(w)} 	= 	0	= \underbrace{A^*(z)A^*(w)},\\	
	\underbrace{A(z)A^*(w)}	=	\ds\frac{-2zw}{(z-w)^3}  =  \underbrace{A^*(z)A(w)}
\end{cases}
\end{equation}

We consider a natural representation of the algebra $\A$ as follows. The exterior algebra $\Lambda(V)$ of the infinite-dimensional
vector space $V=\oplus_{i\in\mathbb Z+1/2}\mathbb C\ub{i}$ is spanned by the finite wedge products $u=\ub{i_1}\wedge\ldots\wedge \ub{i_k}$, where $i_1<\cdots<i_k$, $i_j\in\mathbb Z+1/2$. The set of
indices appearing in $u$ is called the support of $u$, i.e., $supp(u)=\{i_1, \ldots, i_k\}$. We now consider the
semi-infinite wedge space $\Lambda:=\Lambda^{\infty/2}(V)$, which is an infinite-dimensional $\Lambda(V)$-module spanned by
the semi-infinite wedge products (of weight $r$)
\begin{equation}\label{wedgeprod}
v = \ub{i_1}\wedge\ub{i_2}\wedge\ldots\wedge\ub{i_k}\wedge\ldots,
\end{equation}
where the support $supp(v)=\{i_1, i_2, \cdots \}$ satisfy the condition that
$i_1<i_2<\cdots<i_k<\cdots$ and $i_k=k+r+1/2$ for a fixed $r\in\mathbb Z$ and sufficiently large integer $k$.
Here $\Lambda(V)$ acts on $\Lambda^{\infty/2}(V)$ by concatenation.
A general homogeneous element $v$ in $\Lambda$ is characterized by the
condition that both $supp(v)\cap(\mathbb Z_-+1/2)$ and $(\mathbb Z_++1/2)-supp(v)$ are finite subsets.
Here $\mathbb Z_{+}$
(or $\mathbb Z_{-}$)
denotes the set of non-negative (or negative) integers respectively.

The semi-infinite wedge products satisfy similar properties as finite wedge products such as adjacent factors anti-commute with each other:
	\begin{equation}\label{antisymm1}
		\ub i\wedge\ub j\wedge\ldots = - \ub j\wedge\ub i\wedge\dots.\end{equation}
\noindent It follows that $\ub{i}\wedge\ub{i}\wedge\ldots = 0$, for any $\ub{i}$. Also, it follows that the factors of $v$ or any similar semi-infinite wedge product can always be placed into ascending numerical order, as described above in the definition of $v$ (see Eq. (\ref{wedgeprod})).

Since, for $n_j\in\Z_++\half$, only finitely many ${n_j}$ are not in $supp(v)$, it may then be convenient to use the notation $\ub{\widehat{i}}$ to imply that $\ub i$ is not a factor of $v$. For each $i\in\mathbb Z+1/2$ we define the
operator $\partial_{\ub{i}}$ acting on $\Lambda$ as follows. For any semi-infinite wedge product $v$ as in Eq. (\ref{wedgeprod}), if $i\notin supp(v)$ then $\partial_{\ub{i}}v=0$. Otherwise, if $i_k=i$
\begin{equation}\label{antisymm2}
\partial_{\ub{i}}(\ub{i_1}\wedge \cdots \wedge \ub{i_k}\wedge\cdots)=(-1)^{k+1}\ub{i_1}\wedge\ub{i_2}\wedge\dots\wedge\ub{\widehat i_k}\wedge\dots.
\end{equation}
The action of $\partial_{\ub{i}}$ is well-defined. In fact, without loss of generality
suppose we rewrite the semi-infinite wedge product
$v=\ub{i_1}\wedge \cdots \wedge \ub{i_k}\wedge\cdots=-\ub{i_1}\wedge \cdots \wedge \ub{i_k+1}\wedge\ub{i_k}\cdots$.
Since $i_k=i$ is now in the $(k+1)$th position, our rule (\ref{antisymm2}) gives that
$$\partial_{\ub{i}}(v)=\partial_{\ub{i}}(-\ub{i_1}\wedge \cdots \wedge \ub{i_{k+1}}\wedge \ub{i_{k}}\wedge\cdots)=-(-1)^{k+2}\ub{i_1}\wedge\ub{i_2}\wedge\dots\wedge\ub{\widehat i_k}\wedge\dots
$$
which is the same as the right-hand side of (\ref{antisymm2}).

An example of homogeneous semi-infinite wedge vector $v$ is
$$v = \ub{-9/2}\wedge\ub{-3/2}\wedge\ub{-1/2}\wedge\ub{1/2}\wedge\ub{5/2}\wedge\ub{7/2}\wedge\ub{9/2}\wedge\ldots,$$
which has $supp(v) = 
\{-9/2, -3/2, -1/2\}\cup(\mathbb Z_++1/2)-\{3/2\}$.

Equivalently, we could write this example as
$$v = \ub{-9/2}\wedge{\widehat{\ub{-7/2}}}\wedge{\widehat{\ub{-5/2}}}\wedge\ub{-3/2}\wedge\ub{-1/2}\wedge\ub{1/2}\wedge{\widehat{\ub{3/2}}}\wedge\ub{5/2}\wedge\ub{7/2}\wedge\ub{9/2}\wedge\ldots ,$$
\text{ or}
$$v = -\partial_{\ub{-5/2}}\lrpar{\ub{-9/2}\wedge\ub{-5/2}\wedge\ub{-3/2}\wedge\ub{-1/2}\wedge\ub{1/2}\wedge\ub{5/2}\wedge\ub{7/2}\wedge\ub{9/2}\wedge\ldots}.$$

Now, define $\Lambda_{\nhalf,\widehat\half}$ to be the subspace of $\Lambda$ spanned by the vectors of the form
$$v = \ub{m_r}\wedge\ldots\wedge\ub{m_1}\wedge\ub{-1/2}\wedge\widehat{\ub{1/2}}\wedge\ub{n_1}\wedge\ub{n_2}\wedge\ldots.$$
In particular, we stress that for these vectors $v$, ${-1/2}\in supp(v)$ and ${1/2}\notin supp(v)$. Using the notation described above to highlight the factors omitted from $supp(v)$, we can write any vector in $\Lambda_{\nhalf,\widehat\half}$ as a linear combination of vectors of the form:
\begin{equation}\label{vector form}
	v = \ub{m_r}\wedge\dots\wedge \ub{m_1}\wedge \ub{-1/2} \wedge\ub{\widehat{1/2}}\wedge
	\dots\wedge \ub{\widehat{n_1}}\wedge\dots\wedge\ub{\widehat{n_s}}\wedge\dots,
\end{equation}
	where $m_r<\ldots<m_1<\nhalf<n_1<\ldots<n_s$, for some positive integers $r$ and $s$. Note especially that the important characteristic of this $v$ is that the negative half-integers $m_i's$ are included in $supp(v)$ and the positive half-integers $n_i's$ are not included in $supp(v)$.
Now for a homogeneous element $v\in\Lambda_{\nhalf,\widehat\half}$, as in  \eqref{vector form}, we define the degree of $v$ by
\begin{equation}\label{cliff alg deg}
deg(v) = \sum\limits^r_{i=1}\lrpar{-m_i-\half} + \sum\limits^s_{j=1}\lrpar{n_j - \half}.
\end{equation}

For example, let
$$w = \ub{-11/2}\wedge\ub{-5/2}\wedge\ub{-3/2}\wedge\ub{-1/2}\wedge\ub{3/2}\wedge\ub{7/2}\wedge\ub{11/2}\wedge\ub{15/2}\wedge\ub{17/2}\wedge\ldots.$$
Then,
$$\begin{array}{rll}
	\ds deg(w)	&=\ds	\lrpar{\xhalf{11}-\half} + \lrpar{\xhalf{5}-\half} + \lrpar{\xhalf{3}-\half}	&	\\
			&\ds\qquad	+ \lrpar{\xhalf{5}-\half} + \lrpar{\xhalf{9}-\half} + \lrpar{\xhalf{13}-\half}	&=	 20.\end{array}$$

Define the action of the algebra $\A$ on $\Lambda_{\nhalf,\widehat\half}$ as follows:
		\begin{align}
		A(m) \cdot v	&=	\lrpar{m-\half}\ub{m}\wedge v,\\
		A^*(m) \cdot v	&=	\lrpar{m-\half}\partial_{\ub{-m}}\lrpar v,
		\end{align}
where $v = \ub{m_r}\wedge\dots\wedge \ub{m_1}\wedge \ub{-1/2} \wedge\ub{\widehat{1/2}}\wedge
	\dots\wedge \ub{\widehat{n_1}}\wedge\dots\wedge\ub{\widehat{n_s}}\wedge\dots$ in $\Lambda_{\nhalf,\widehat\half}$, and $A(m),\ A^*(m)\in \A$. This action is extended by linearity to all of $\Lambda_{\nhalf,\widehat\half}$.

\begin{thm}
	Under the above action $\Lambda_{\nhalf,\widehat\half}$ is an $\A$-module.
\end{thm}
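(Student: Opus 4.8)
The plan is to verify directly that the operators $A(m)$ and $A^*(m)$, acting by wedging and contraction, satisfy the defining relations \eqref{ARelation1} and \eqref{ARelation2} of $\A$ on each basis vector $v$ of the form \eqref{vector form}. Before checking any relation I would first confirm that these operators really map $\Lambda_{\nhalf,\widehat\half}$ into itself, so that the action is well defined. The two boundary indices $\pm\half$ play a special role: the coefficient $(m-\half)$ vanishes at $m=\half$, so $A(\half)$ and $A^*(\half)$ act as $0$ and can never insert $\ub{\half}$ into, nor delete $\ub{-\half}$ from, the support; meanwhile $A(-\half)v$ vanishes automatically because $-\half\in supp(v)$ forces $\ub{-\half}\wedge v=0$, and $A^*(-\half)v$ vanishes because $\half\notin supp(v)$ forces $\partial_{\ub{\half}}(v)=0$. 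For every other index, wedging in $\ub{m}$ or deleting $\ub{-m}$ alters the support only away from $\pm\half$ and preserves the finiteness (admissibility) conditions, so the subspace is preserved.

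The heart of the argument is the single fermionic identity
\begin{equation*}
\partial_{\ub{i}}\lrpar{\ub{j}\wedge v}+\ub{j}\wedge\partial_{\ub{i}}(v)=\delta_{i,j}\,v,
\end{equation*}
valid for all $i,j\in\Z+\half$ and all semi-infinite wedge products $v$. I would prove it by a case analysis on whether $i=j$ and on whether each of $i,j$ lies in $supp(v)$, using \eqref{antisymm1}, \eqref{antisymm2}, and the well-definedness of $\partial_{\ub{i}}$ established earlier. When $i=j$, exactly one of the two terms survives, depending on whether $j\in supp(v)$, and it returns $v$; when $i\ne j$ with either $j\in supp(v)$ or $i\notin supp(v)$, both terms vanish; and in the remaining case $i\ne j$, $j\notin supp(v)$, $i\in supp(v)$, the two terms cancel because prepending $\ub{j}$ shifts the position of $\ub{i}$ by one and hence flips the sign in \eqref{antisymm2}.

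Granting this identity, the relations follow by a short computation. From $A(m)v=(m-\half)\,\ub{m}\wedge v$ and $A^*(n)v=(n-\half)\,\partial_{\ub{-n}}(v)$ I obtain, taking $i=-n$ and $j=m$,
\begin{equation*}
\anticom{A(m)}{A^*(n)}v=(m-\half)(n-\half)\lrpar{\partial_{\ub{-n}}\lrpar{\ub{m}\wedge v}+\ub{m}\wedge\partial_{\ub{-n}}(v)}=(m-\half)(n-\half)\,\delta_{m+n,0}\,v,
\end{equation*}
and since on the support of $\delta_{m+n,0}$ one has $n=-m$ and thus $(m-\half)(n-\half)=-(m^2-\fourth)$, this matches \eqref{ARelation1}. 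The relation $\anticom{A(m)}{A(n)}=0$ is immediate from $\ub{m}\wedge\ub{n}=-\ub{n}\wedge\ub{m}$ in \eqref{antisymm1}, while $\anticom{A^*(m)}{A^*(n)}=0$ follows from the anticommutativity of the contraction operators $\partial_{\ub{i}}$, which I would record as a companion identity and prove by the same kind of sign bookkeeping; together these give \eqref{ARelation2}. I expect the sign tracking in the case analysis for the main fermionic identity, specifically the cancellation when $\ub{j}$ is prepended, to be the only genuinely delicate point; everything else is routine bookkeeping.
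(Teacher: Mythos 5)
Your proposal is correct, and at its core it is the same argument as the paper's: verify the defining relations of $\A$ directly on the basis vectors of the form \eqref{vector form}. The differences are in packaging, and both work in your favor. Where the paper runs a four-case analysis on whether $m$ and $-n$ lie in $supp(v)$ (writing out only one case and computing $\anticom{A(m)}{A^*(n)}\cdot v$ with the scalar prefactors carried along), you factor everything through the single coefficient-free identity $\partial_{\ub{i}}\lrpar{\ub{j}\wedge v}+\ub{j}\wedge\partial_{\ub{i}}(v)=\delta_{i,j}v$, from which \eqref{ARelation1} drops out in one line once you note that $(m-\half)(n-\half)=-(m^2-\fourth)$ on the support of $\delta_{m+n,0}$; your sign bookkeeping in the cancellation case ($i\ne j$, $i\in supp(v)$, $j\notin supp(v)$, where prepending $\ub{j}$ shifts the position of $\ub{i}$ by one) is exactly right and rests on the well-definedness of $\partial_{\ub{i}}$ that the paper establishes after \eqref{antisymm2}. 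You also add a check the paper omits entirely: that the operators actually preserve the subspace $\Lambda_{\nhalf,\widehat\half}$, i.e.\ that $\ub{-1/2}$ can never be deleted and $\ub{1/2}$ can never be inserted, because the offending operators $A(\half)$ and $A^*(\half)$ carry the vanishing coefficient $(m-\half)$ while $A(-\half)$ and $A^*(-\half)$ kill every $v$ in the subspace outright. This well-definedness point is a genuine (if small) gap in the paper's write-up that your argument closes.
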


\begin{proof}
It is sufficient to show that the action preserves relations \eqref{ARelation1}  and \eqref{ARelation2} for\newline $v = \ub{m_r}\wedge\dots\wedge \ub{m_1}\wedge \ub{-1/2} \wedge\ub{\widehat{1/2}}\wedge\dots\wedge\ub{\widehat{n_1}}\wedge\dots\wedge\ub{\widehat{n_s}}\wedge\dots\in\Lambda_{\nhalf,\widehat\half}$. First, we prove that \eqref{ARelation1} holds:
	$$\anticom{A(m)}{A^*(n)}\cdot v = -\lrpar{m^2-\fourth}\delta_{m+n,0}v.$$

For $m,n\in\Z+\half$, there are four cases to consider. Namely,
	\begin{itemize}
	\item[(i)] ${m},{-n}\in\ supp(v)$,
	\item[(ii)] ${m} \notin supp(v),{-n}\in\ supp(v)$,
	\item[(iii)] ${m}\in supp(v),{-n}\notin\ supp(v)$, and
	\item[(iv)] ${m},{-n}\notin\ supp(v)$.
	\end{itemize}
	We only prove that \eqref{ARelation1} holds in case (i) below since the other three cases are similar. In this case, we have
	\begin{align*}
		\anticom{A(m)}{A^*(n)}\cdot v	
			&=	A(m)\cdot\lrpar{\lrpar{n-\half}\partial_{\ub {-n}}\lrpar v}
				+	A^*(n)\cdot\lrpar{\lrpar{m-\half}\ub{m}\wedge v}	\\
			&=	A(m)\cdot\lrpar{\lrpar{n-\half}\partial_{\ub {-n}}\lrpar v}	\\
			&=	\lrpar{m-\half}\lrpar{n-\half}\ub{m}\wedge\lrpar{\partial_{\ub {-n}}
				\lrpar v}.
	\end{align*}
			
Clearly, $\ub{m}\wedge\lrpar{\partial_{\ub {-n}}\lrpar v}\ne0$ if and only if $m= -n$. Hence, when ${m},{-n}\in supp(v)$, we have $\anticom{A(m)}{A^*(n)}\cdot v = -\lrpar{m^2-\fourth}v\delta_{m,-n}$ proving \eqref{ARelation1}.

Now to show that $\anticom{A(m)}{A(n)}\cdot v = 0$ holds, we
observe that
	\begin{align*}
		\anticom{A(m)}{A(n)}\cdot v	
		  &=	A(m)A(n)\cdot v + A(n)A(m)\cdot v\\
			&=	A(m)\cdot\lrpar{\lrpar{n-\half}\ub{n}\wedge v}
				+	A(n)\cdot\lrpar{\lrpar{m-\half}\ub{m}\wedge v}	\\
			&=	\lrpar{m-\half}\lrpar{n-\half}\ub{m}\wedge\ub{n}\wedge v
				+	\lrpar{n-\half}\lrpar{m-\half}\ub{n}\wedge\ub{m}\wedge v	\\
			&= 0,
	\end{align*}
	since $\ub{m}\wedge\ub{n} = -\ub{n}\wedge\ub{m}$.

It can be shown similarly using relation \eqref{antisymm2} that $\anticom{A^*(m)}{A^*(n)}\cdot v = 0$ also holds.
\end{proof}

Consider the group algebra $\C\lrbrack{\Z\al}$ where, for all $\beta, \lambda\in\Z\al$, we define the actions
                 \begin{equation}
		\begin{cases}
			e^\beta \cdot e^\lambda	=	e^{\beta+\lambda}, \\ 
			d\cdot e^\lambda 		=	-\fourth(\lambda,\lambda)e^\lambda.
			\label{gp alg deg}
			\end{cases}
		\end{equation}
For any coweight $\beta(0)$ ($\beta\in \Z\al$) we define
\begin{equation*}
z^{\beta(0)} \cdot e^\lambda	=	z^{(\beta,\lambda)}e^{\lambda},
\end{equation*}
and we identify $\alpha(0)$ with $H(0)$.

\begin{thm}
\label{Rep Defn}
	Let $V = S(\h^-)\otimes\Lambda_{\nhalf,\widehat\half}\otimes\C\lrbrack{\Z\al}$.
	Define the map $\pi:\slhat\longrightarrow gl(V)$ by

\begin{align*}
	X(z)	&\longmapsto	E^+_+(z)E^+_-(z)	\otimes	A(z)\otimes e^\al z^{-\frac{\al(0)}{2}}, \\
	Y(z)	&\longmapsto	E^-_+(z)E^-_-(z)	\otimes	A^*(z)\otimes e^{-\al} z^{\frac{\al(0)}{2}} ,\\
	H(z)	&\longmapsto	H^*(z)\otimes 1	\otimes	1+1\otimes 1\otimes H(0),	\\
	c	&\longmapsto	-2,	\\
	d	&\longmapsto	d \otimes1\otimes1 + 1\otimes d\otimes1 + 1\otimes1\otimes d,
\end{align*}
where $H^*(z)=H(z)-H(0)$.  Then $\pi$ is a representation of $\slhat$ on $V$.
\end{thm}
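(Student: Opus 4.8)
The plan is to verify that $\pi$ preserves every defining relation of $\slhat$: the Heisenberg relation $\com{H(m)}{H(n)}=2mc\delta_{m,-n}$, the three generating-function relations \eqref{hxbracket}, \eqref{hybracket} and \eqref{xybracket}, the centrality of $c$, and the degree relations $\com{d}{a(m)}=ma(m)$. Since $\pi(c)=-2$ is a scalar it is central automatically; and since each tensor factor carries a compatible grading, namely the Heisenberg grading on $S(\h^-)$, the grading \eqref{cliff alg deg} on $\Lambda_{\nhalf,\widehat\half}$, and the grading \eqref{gp alg deg} on $\C\lrbrack{\Z\al}$, one checks directly that $\pi(d)$ is the total degree operator, so that $\com{d}{a(m)}=ma(m)$ holds and the first factor reproduces the Heisenberg relation with $c\mapsto-2$. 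These verifications are routine. The substance lies in the three bracket relations, and the organizing principle throughout is that operators acting on distinct tensor factors commute, so each bracket splits into independent boson, Clifford, and lattice contributions.

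For \eqref{hxbracket} and \eqref{hybracket} the Clifford factor plays no role, since $\pi(H(z))$ acts only on the boson and lattice factors while $A(w)$ merely rides along. Thus $\com{\pi(H(z))}{\pi(X(w))}$ reduces to $\com{H^*(z)}{E^+_+(w)E^+_-(w)}$ on the boson factor together with $\com{H(0)}{e^\al w^{-\al(0)/2}}$ on the lattice factor. A short computation with $\com{H(m)}{H(n)}=-4m\delta_{m,-n}$ gives $\com{H^*(z)}{E^+_+(w)E^+_-(w)}=2\lrpar{\dirac-1}E^+_+(w)E^+_-(w)$, while $z^{\beta(0)}\cdot e^\lambda=z^{(\beta,\lambda)}e^\lambda$ and $(\al,\al)=2$ give $\com{H(0)}{e^\al w^{-\al(0)/2}}=2\,e^\al w^{-\al(0)/2}$. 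The essential point is that the anomalous constant $-2$ produced on the boson factor is cancelled exactly by the zero-mode contribution $+2$ from the lattice factor, leaving $2\dirac\,\pi(X(w))=2\pi(X(w))\dirac$ as required; \eqref{hybracket} follows identically with the signs reversed.

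The main computation is \eqref{xybracket}, where all three factors interact. I would first normal-order each of $\pi(X(z))\pi(Y(w))$ and $\pi(Y(w))\pi(X(z))$. By the product identities for the $E^\pm_\pm$ operators, the boson parts reduce to a common normal-ordered operator $\mathcal E(z,w)=E^+_+(z)E^-_+(w)E^+_-(z)E^-_-(w)$ times the scalars $1-\frac wz$ and $1-\frac zw$, expanded for $|z|>|w|$ and $|w|>|z|$ respectively; the lattice parts reduce to $z\,(w/z)^{\al(0)/2}$ and $w\,(w/z)^{\al(0)/2}$; and the Clifford parts are $A(z)A^*(w)$ and $A^*(w)A(z)$, whose normal-ordered pieces are negatives of one another by \defref{Normal Ordering} while their contractions are read off from \eqref{aastarcontract}. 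Assembling the factors, the coefficient of the common normal-ordered Clifford bilinear is $z\lrpar{1-\frac wz}=z-w$ in the first product and $-w\lrpar{1-\frac zw}=z-w$ in the second, so this bilinear cancels in the bracket; what survives is a purely scalar multiple of $\Phi(z,w):=\mathcal E(z,w)\otimes 1\otimes (w/z)^{\al(0)/2}$.

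That surviving scalar is $-2zw$ times the difference of the expansions of $(z-w)^{-2}$ in the two regions, which by the standard formal-variable identity equals $-2w\,\partial_w\dirac$, so it remains to evaluate $-2w\,\Phi(z,w)\,\partial_w\dirac$. Expanding $-2w\,\Phi(z,w)$ to first order about $w=z$ and using $\lrpar{w-z}\partial_w\dirac=-\dirac$, which annihilates all higher-order terms, the data $\mathcal E(z,z)=1$ and the scalar part of the first-order coefficient combine to give $-2w\,\partial_w\dirac$, while the operator part of that coefficient yields $\pi(H(w))\dirac$; indeed $2z\,\partial_w\Phi(z,w)\big|_{w=z}=\pi(H(z))$, because $\partial_w\mathcal E(z,w)\big|_{w=z}=\frac{1}{2z}H^*(z)$ and $\partial_w(w/z)^{\al(0)/2}\big|_{w=z}=\frac{H(0)}{2z}$. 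This reconstructs $\pi(H(w))\dirac-2w\,\partial_w\dirac$, matching \eqref{xybracket} with $c\mapsto-2$. I expect this final paragraph to be the principal obstacle: one must keep the two expansion regions consistent through every normal-ordering step, confirm that the Clifford bilinear genuinely cancels (which uses both the antisymmetry $:a(z)b(w):=-:b(w)a(z):$ and the antisymmetry of $z-w$), and extract the first-order Taylor coefficient in $w$ about the diagonal so as to recover $\pi(H(w))$ and not a spurious multiple of it. Since every pole that occurs is of finite order, the formal delta-function calculus applies with no analytic subtleties, so the remaining difficulty is purely one of careful bookkeeping.
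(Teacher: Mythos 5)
Your proposal is correct and follows essentially the same route as the paper: verify the bracket relations directly, with the main work in \eqref{xybracket} done by normal-ordering via the $E^\pm_\pm$ product identities and the contraction \eqref{aastarcontract}, observing that the $:\!A(z)A^*(w)\!:$ term cancels because both orderings carry the same coefficient $z-w$, and then recovering $\pi(H(w))\dirac-2w\partial_w\dirac$ from $-2w\,\mathcal{E}(z,w)(w/z)^{\al(0)/2}\partial_w\dirac$ by delta-function calculus. The only (immaterial) difference is that you extract the final answer by a first-order Taylor expansion about $w=z$ using $(w-z)\partial_w\dirac=-\dirac$, whereas the paper applies the product rule to the total derivative $-2w\partial_w[\mathcal{E}(z,w)(w/z)^{\al(0)/2}\dirac]$ and the last identity of Lemma 2.1.
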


\begin{proof}
To see that $\pi$ defines a representation of $\slhat$, we simply need to show that the map $\pi$ preserves relations \eqref{hxbracket} -- \eqref{xybracket}. For example, using (\ref{aastarcontract}) we have

\begin{align*}
\pi(X(z))\pi(Y(w))	&=	E^+_+(z)E^+_-(z)E^-_+(w)E^-_-(w)	\otimes
			A(z)e^\al z^{-\frac{\al(0)}{2}}A^*(w)e^{-\al} w^{\frac{\al(0)}{2}}	\\
		&=	E^+_+(z)E^-_+(w)E^+_-(z)E^-_-(w)	\otimes
			:\!A(z)A^*(w)\!:\lrpar{z-w}z^{-\frac{\al(0)}{2}}w^{\frac{\al(0)}{2}}	\\
		&\qquad	+	E^+_+(z)E^-_+(w)E^+_-(z)E^-_-(w)	\otimes	
			\lrpar{\frac{-2zw}{(z-w)^2}}\lrpar{\frac{w}{z}}^{\frac{\al(0)}{2}}, 
\end{align*}

and
\begin{align*}
\com{\pi(X(z))}{\pi(Y(w))}	
			&=	-2E^+_+(z)E^-_+(w)E^+_-(z)E^-_-(w)
				\lrpar{\frac{w}{z}}^{\frac{\al(0)}{2}}\lrbrack{w\partial_w\dirac}	\\
			&=	- 2w\partial_w\lrbrack{E^+_+(z)E^-_+(w)E^+_-(z)E^-_-(w)
				\lrpar{\frac{w}{z}}^{\frac{\al(0)}{2}}\dirac}	\\
			&\qquad	+ 2w\partial_w\lrbrack{E^+_+(z)E^-_+(w)E^+_-(z)E^-_-(w)}
				\lrpar{\frac{w}{z}}^{\frac{\al(0)}{2}}\dirac	\\
			&\qquad	+ 2wE^+_+(z)E^-_+(w)E^+_-(z)E^-_-(w)
				\partial_w\lrbrack{\lrpar{\frac{w}{z}}^{\frac{\al(0)}{2}}}\dirac	\\
			&=	- 2w\partial_w\dirac + \sum_{n\ne0}H(n)w^{-n} \dirac + \al(0)\dirac	\\
			&=	\sum_{n\ne0}H(n)w^{-n} \dirac + H(0)\dirac- 2w\partial_w\dirac		\\
			&=	H(w)\dirac - 2w\partial_w\dirac = \pi(\com{X(z)}{Y(w)}).
\end{align*}

It follows similarly that the map $\pi$ preserves the relations \eqref{hxbracket} and \eqref{hybracket}.

Therefore, $\pi$ is a representation of $\slhat$ on $V$.
\end{proof}

Now, with the degree defined in (\ref{cliff alg deg}, \ref{gp alg deg}) the following result is immediate.
\begin{cor}\label{character of V}
The graded $q$-dimension of $V$ is
	$\ds \dim_q(V) = \begin{array}{c}
			\prod\limits_{m\ge0}\lrpar{1+q^m}^2\sum\limits_{p\in\Z}q^{\frac{p^2}{2}}	\\
			\hline	\prod\limits_{n\ge1}\lrpar{1-q^{n}}\end{array}.$
\end{cor}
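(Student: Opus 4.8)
The plan is to use that $V$ is a tensor product on which the grading operator acts additively, so that $\dim_q(V)$ factors into three pieces that I would compute separately. Concretely, measuring degree on $\Lambda_{\nhalf,\widehat\half}$ by \eqref{cliff alg deg}, on $\C\lrbrack{\Z\al}$ by \eqref{gp alg deg}, and on $S(\h^-)$ by declaring $\deg H(-n)=n$ — all of which agree with minus the $d$-eigenvalue under $\pi(d)=d\otimes1\otimes1+1\otimes d\otimes1+1\otimes1\otimes d$ from \thmref{Rep Defn} — a homogeneous pure tensor has total degree equal to the sum of the degrees of its three factors. Hence $\dim_q(V)=\dim_q S(\h^-)\cdot\dim_q\Lambda_{\nhalf,\widehat\half}\cdot\dim_q\C\lrbrack{\Z\al}$, and it remains to evaluate each factor.

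For the bosonic factor I would invoke a PBW basis of $S(\h^-)$ consisting of monomials in the generators $H(-n)$, $n\ge1$, each of degree $n$ since $\com d{H(-n)}=-nH(-n)$. Summing over all occupation numbers gives $\dim_q S(\h^-)=\prod_{n\ge1}(1+q^n+q^{2n}+\cdots)=\prod_{n\ge1}(1-q^n)^{-1}$, which is exactly the denominator of the claimed expression. For the lattice factor, the basis $\{e^{p\al}:p\in\Z\}$ satisfies $d\cdot e^{p\al}=-\fourth(p\al,p\al)e^{p\al}=-\tfrac{p^2}{2}e^{p\al}$ by \eqref{gp alg deg} together with $(\al,\al)=2$, so $e^{p\al}$ sits in degree $p^2/2$ and $\dim_q\C\lrbrack{\Z\al}=\sum_{p\in\Z}q^{p^2/2}$, the theta-type factor in the numerator.

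The substantive step — and the one I expect to be the main obstacle — is the fermionic factor $\dim_q\Lambda_{\nhalf,\widehat\half}$. Here I would set up the bijection between the spanning vectors \eqref{vector form} and pairs $(S_-,S_+)$ of finite sets, where $S_-\subseteq\{-3/2,-5/2,\dots\}$ records the negative half-integers wedged in below $\nhalf$ (the $m_i$) and $S_+\subseteq\{3/2,5/2,\dots\}$ records the positive half-integers deleted above $\half$ (the $n_j$), all measured against the degree-zero vacuum $\ub{-1/2}\wedge\ub{3/2}\wedge\ub{5/2}\wedge\cdots$. The crucial observation is that the degree \eqref{cliff alg deg} is additive over these independent binary choices: including $\ub{-(2k+1)/2}$ contributes $-m_i-\half=k$ and deleting $\ub{(2k+1)/2}$ contributes $n_j-\half=k$. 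Summing over all configurations therefore factors as a product $\prod(1+q^m)^2$, with one factor $(1+q^m)$ for each of the two families of toggles, and multiplying the three pieces yields the stated $\dim_q(V)$. The delicate points are verifying that the degree really is additive over the toggles (which is immediate from \eqref{cliff alg deg}) and matching the index range of this product against the product $\prod_{m\ge0}(1+q^m)^2$ in the statement; the remaining bookkeeping is routine.
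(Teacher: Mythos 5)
Your overall strategy coincides with the paper's: the paper gives no written proof, asserting the corollary is immediate from the degrees \eqref{cliff alg deg} and \eqref{gp alg deg}, and the intended argument is exactly the factorization $\dim_q(V)=\dim_q S(\h^-)\cdot\dim_q\Lambda_{\nhalf,\widehat\half}\cdot\dim_q\C\lrbrack{\Z\al}$ that you set up. Your evaluations of the bosonic factor $\prod_{n\ge1}(1-q^n)^{-1}$ and of the lattice factor $\sum_{p\in\Z}q^{p^2/2}$ are correct.

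The one step you defer as ``routine bookkeeping'' --- matching the index range --- is precisely where the computation does not reproduce the displayed formula, so it cannot be waved away. By your own analysis, the admissible inclusions are $\ub{-(2k+1)/2}$ with $k\ge1$ (since $m_i<\nhalf$) and the admissible deletions are $\ub{(2k+1)/2}$ with $k\ge1$ (since $\ub{1/2}$ is permanently absent and $\ub{-1/2}$ permanently present in $\Lambda_{\nhalf,\widehat\half}$), each contributing degree $k\ge1$. There are no degree-zero toggles, because the two positions $\pm\half$ that would supply them are frozen by the definition of $\Lambda_{\nhalf,\widehat\half}$; hence $\dim_q\Lambda_{\nhalf,\widehat\half}=\prod_{m\ge1}(1+q^m)^2$, which differs from the stated $\prod_{m\ge0}(1+q^m)^2$ by the constant $(1+q^0)^2=4$. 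This discrepancy is real: every homogeneous piece of each tensor factor sits in nonnegative degree, and the degree-zero subspace of $V$ is spanned by the single vector $1\otimes\ub{-1/2}\wedge\ub{3/2}\wedge\ub{5/2}\wedge\cdots\otimes e^{0}$, so the constant term of $\dim_q(V)$ is $1$, whereas the displayed formula has constant term $4$. What your argument actually proves is the formula with numerator $\prod_{m\ge1}(1+q^m)^2\sum_{p\in\Z}q^{p^2/2}$; the index $m\ge0$ in the corollary (and likewise in the subsequent remark) should evidently start at $1$. State this explicitly rather than leaving the index match as an unexamined step.
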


\begin{rem}
It is known (cf. \cite{KK, Ku, Mal}) that the graded $q$-dimension of an irreducible $\slhat$ highest weight module $L(\lambda)$ of level $-2$ is
$\ds \dim_q(L(\lambda)) = \prod_{n\ge0}\lrpar{1+q^n}^2.$
\end{rem}

\begin{thm}\label{v is a HWV}\ \newline
	Consider  $v_0 = 1\otimes\ub{-1/2}\wedge\ub{3/2}\wedge\ub{5/2}\wedge\dots\otimes1$, and
		$v_1 = 1\otimes\ub{-1/2}\wedge\ub{3/2}\wedge\ub{5/2}\wedge\dots\otimes e^{-\al}$ in $V$.
	Then, $v_0$ and $v_1$ are highest weight vectors with weights
	$\lambda_0 = -2\Lambda_0$ and \newline
	$\lambda_1 = -2\Lambda_1-\half\delta$, respectively.
\end{thm}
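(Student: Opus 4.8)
The plan is to verify, for each of $v_0$ and $v_1$, the two defining properties of a highest weight vector of weight $\lambda_i$: that $v_i$ is a simultaneous eigenvector for the affine Cartan subalgebra $\h_0 = \C H(0)\oplus\C c\oplus\C d$ with $H(0)$, $c$, $d$ acting by the scalars dictated by $\lambda_i$, and that $v_i$ is annihilated by the two positive Chevalley generators $e_1 = X(0)$ and $e_0 = Y(1)$. Annihilation by $e_0$ and $e_1$ alone already forces annihilation by the whole positive part $\mathfrak{n}_+$: since $e_0, e_1$ generate $\mathfrak{n}_+$ and $v_i$ is a weight vector, an induction on the height of a positive root $\beta$, using $\com{e_j}{x}\,v_i = e_j(x\,v_i) - x(e_j\,v_i)$, yields $\mathfrak{g}_\beta\, v_i = 0$ for every $\beta>0$. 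Thus the theorem reduces to four scalar evaluations and four annihilation checks.

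I would dispatch the eigenvalue computations first, as they are routine. In both $v_0$ and $v_1$ the $S(\h^-)$-factor is the constant $1$ and the $\Lambda_{\nhalf,\widehat\half}$-factor is the reference vector $\ub{-1/2}\wedge\ub{3/2}\wedge\ub{5/2}\wedge\cdots$ of degree $0$. Hence under the map $\pi$ of \thmref{Rep Defn} the zero mode $\pi(H(0)) = 1\otimes 1\otimes H(0)$ acts purely on the group-algebra factor via $H(0)\cdot e^\lambda = (\al,\lambda)e^\lambda$, the element $c$ acts by $-2$, and $d$ acts on $v_i$ by $-\fourth(\lambda,\lambda)$ since the other two tensor factors have degree $0$. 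Evaluating on $e^0$ and $e^{-\al}$, and using $h_1 = H(0)$, $h_0 = -H(0)+c$ together with the normalizations $\Lambda_i(h_j)=\delta_{ij}$, $\Lambda_i(d)=0$, $\delta(h_i)=0$, $\delta(d)=1$, one matches the eigenvalues against $\lambda_0 = -2\Lambda_0$ and $\lambda_1 = -2\Lambda_1 - \half\delta$.

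The heart of the argument, and the main obstacle, is the annihilation, which I would reduce to one clean fact about the wedge factor: applied to the reference vector $v$, both $A(z)\cdot v$ and $A^*(z)\cdot v$ are power series in $z$ beginning in order $z^1$. Granting this, each annihilation follows from a minimal-degree count in $z$, the three tensor factors contributing additively: $E^\pm_-(z)\cdot 1 = 1$ and $E^\pm_+(z)\cdot 1 = 1+O(z)$ give order $\ge z^0$, while the group-algebra factor contributes the single monomial $z^{-(\al,\lambda)/2}$ for $X(z)$ and $z^{(\al,\lambda)/2}$ for $Y(z)$; assembling these leaves the coefficient of $z^0$ in $X(0)\cdot v_i$ and of $z^{-1}$ in $Y(1)\cdot v_i$ identically zero, the binding cases being $X(0)\cdot v_0$ and $Y(1)\cdot v_1$. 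Proving the clean fact is where the real content sits: from $A(m)\cdot v = (m-\half)\,\ub{m}\wedge v$ and $A^*(m)\cdot v = (m-\half)\,\partial_{\ub{-m}}(v)$, the candidate order-$z^0$ term (from $A(\nhalf)$, resp. $A^*(\nhalf)$) vanishes automatically because $\nhalf\in supp(v)$ while $\half\notin supp(v)$, and the candidate order-$z^{-1}$ term (from $A(\half)$, resp. $A^*(\half)$) vanishes because of the prefactor $(\half-\half)=0$ in the action formulas. It is exactly this vanishing at the index $\half$ --- the signature of the critical level built into the reference configuration in which $\nhalf$ is occupied and $\half$ is empty --- that makes $v_0$ and $v_1$ highest weight vectors; the remaining steps are then formal.
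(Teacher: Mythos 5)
Your proposal is correct and follows essentially the same route as the paper: the annihilation of $v_i$ by $e_0=Y(1)$ and $e_1=X(0)$ rests on exactly the observation the paper uses, namely that only the modes $A(n)$, $A^*(n)$ with $n\le -\threehalf$ act nontrivially on the reference wedge vector, so that $A(z)\cdot v$ and $A^*(z)\cdot v$ are $O(z)$ and the group-algebra monomial $z^{\mp(\al,\lambda)/2}$ then settles each case. Your treatment of the Cartan action is marginally more direct (reading the eigenvalues of $h_1=H(0)$, $c$, and $d$ off the definition of $\pi$ and of the gradation, rather than computing $h_i\cdot v_i = e_i\cdot(f_i\cdot v_i)$ as the paper does), but this is a routine variation rather than a different method.
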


\begin{proof}
	Recall that $e_0 = Y(1)$ and $e_1 = X(0)$. 	Then,
	
	\begin{align*}
	Y(z)\cdot v_0	&=	E^-_+(z)\cdot1\otimes\sum\limits_{n\in\Z+\half}A^*(n)\cdot
					\lrpar{\ub{-1/2}\wedge\ub{3/2}\wedge\ub{5/2}\wedge\dots}
					z^{-n-\half}\otimes e^{-\al}z^{\frac\al2}\cdot1	\\
				&=	E^-_+(z)\otimes\sum\limits_{n\le\nxhalf3}\lrpar{n-\half}
					\partial_{\ub {-n}}\lrpar{\ub{-1/2}\wedge\ub{3/2}\wedge\ub{5/2}
					\wedge\dots}\otimes e^{-\al}		z^{-n-\half}.
	\end{align*}
	
We observe that as $Y(z)$ acts on $v_0$, only the components $Y(n)$ with $n<0$ have non-zero action. Hence $e_0 = Y(1)$ and $f_1 = Y(0)$ annihilate $v_0$.
	
	Similarly,
	\begin{align*}
	X(z)\cdot v_0	&=	E^+_+(z)\cdot1\otimes\sum\limits_{n\in\Z+\half}A(n)\cdot
					\lrpar{\ub{-1/2}\wedge\ub{3/2}\wedge\ub{5/2}\wedge\dots}
					z^{-n-\half}\otimes e^\al z^{\frac{-\al}2}\cdot1\\
				&=	E^+_+(z)\cdot1\otimes\sum\limits_{n\le\nxhalf3}\lrpar{n-\half}\ub{n}
					\wedge\lrpar{\ub{-1/2}\wedge\ub{3/2}\wedge\ub{5/2}\wedge\ldots}
					\otimes e^\al	z^{-n-\half}.
	\end{align*}
	
	Thus, $e_1\cdot v_0 = X(0)\cdot v_0 = 0$. Furthermore,
	$$f_0\cdot v_0 = -2\lrpar{1\otimes\ub{-3/2}\wedge\ub{-1/2}\wedge\ub{3/2}\wedge\ub{5/2}\wedge\ldots\otimes e^\al}.$$
	Letting $Y(z)$ act on $f_0\cdot v_0$, we easily see that $e_0\cdot\lrpar{f_0\cdot v_0} = -2v_0$.
	Therefore, we have
	$h_0\cdot v_0	=	\lrpar{e_0f_0 - f_0e_0}\cdot v_0
				=	e_0\cdot\lrpar{f_0\cdot v_0}	= -2v_0$, and
	$h_1\cdot v_0	=	\lrpar{e_1f_1 - f_1e_1} \cdot v_0 = 0$.
Using \eqref{cliff alg deg} and \eqref{gp alg deg}, we observe that $d\cdot v_0 = 0$. Hence, the weight of $v_0$ is $\lambda_0 = -2\Lambda_0$.
	
Also,
	$Y(z)\cdot v_1			=	E^-_+(z)\otimes\sum\limits_{n\le\nxhalf3}\lrpar{n-\half}
					\partial_{\ub {-n}}\lrpar{\ub{-1/2}\wedge\ub{3/2}\wedge\ub{5/2}
					\wedge\dots}\otimes e^{-2\al}		z^{-n-\xhalf3}$, and
	$X(z)\cdot v_1	
				=	E^+_+(z)\cdot1\otimes\sum\limits_{n\le\nxhalf3}\lrpar{n-\half}\ub{n}
					\wedge\lrpar{\ub{-1/2}\wedge\ub{3/2}\wedge\ub{5/2}\wedge\dots}
					\otimes 1\	z^{-n+\half}$.

By an argument similar to the $v_0$ case, we see that $e_0$ and $e_1$ annihilate $v_1$. We also observe that $f_0$ annihilates $v_1$.  However,
	$f_1\cdot v_1 = -2\lrpar{1\otimes\ub{-1/2}\wedge\ub{5/2}\wedge\ub{7/2}\dots\otimes e^{-2\al}}.$
Letting $X(z)$ act on $f_1\cdot v_1$, we observe that $e_1\cdot\lrpar{f_1\cdot v_1} = -2v_1$.
Thus, $h_0\cdot v_1	=	\lrpar{e_0f_0 - f_0e_0}\cdot v_1 = 0$, and
	$h_1\cdot v_1	=	\lrpar{e_1f_1 - f_1e_1}\cdot v_1
				= e_1\cdot\lrpar{f_1\cdot v_1} = -2v_1.$
Using \eqref{cliff alg deg} and \eqref{gp alg deg}, we see that $d\cdot v_1 = \nhalf v_1$. Hence, the weight of $v_1$ is $\lambda_1 = -2\Lambda_1 - \half\delta$.
\end{proof}

\begin{rem}
Let $L(-2\Lambda_0)$ and $L(-2\Lambda_1-\half\delta)$ be the irreducible $\slhat$-submodules generated by the highest weight vectors $v_0$ and $v_1$, respectively. Unlike in the level 2 case (see \cite{Jing96}), $L(-2\Lambda_0)\oplus L(-2\Lambda_1-\half\delta)\subsetneq V$.
\end{rem}

\section{Representation of the \bZ-algebra}\label{Zalgebra}

Recall that the $\bZ$-algebra $\Zalg$ associated with $\slhat$ is generated by the operators\\
$\lrbrace{\bZ^\pm(m) \vert m\in\Z}$ which satisfy the relations:
\begin{align}
	\gencom{\bZ^\pm(z)}{\bZ^\pm(w)}	&= 0,		\label{plusplus}	\\
	\gencom{\bZ^+(z)}{\bZ^-(w)}		&= H(0)\dirac - 2w\partial_w\dirac,	\label{plusminus}
\end{align}
where $\bZ^\pm(z) = \sum\limits_{m\in\Z}\bZ^\pm(m) z^{-m}$ and $\gencom{\ }{\ }$ is the generalized commutator defined in \eqref{GenComDef}. We define a representation of the $\bZ$-algebra $\Zalg$ on the vacuum space $\Omega(V) = \Lambda_{\half,\widehat\nhalf}\otimes\C\lrbrack{\Z\al}$.

\begin{thm}\label{Zalg Representation}
Define the map $\pi_\Omega:\Zalg\longrightarrow End(\Omega(V))$ by
	\begin{align*}
		\bZ^+(z)	&\mapsto	A(z)e^\al z^{\frac{-\al}{2}}	\\
		\bZ^-(z)	&\mapsto	A^*(z)e^{-\al} z^{\frac{\al(0)}{2}}.
	\end{align*}
The map $\pi_\Omega$ is a representation of $\Zalg$ on $\Omega(V)$.
\end{thm}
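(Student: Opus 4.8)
The plan is to verify directly that the images $\pi_\Omega(\bZ^+(z)) = A(z)e^\al z^{-\al(0)/2}$ and $\pi_\Omega(\bZ^-(z)) = A^*(z)e^{-\al}z^{\al(0)/2}$ satisfy the two defining relations \eqref{plusplus} and \eqref{plusminus} of $\Zalg$ as operator-valued series on $\Omega(V)$. The first step is bookkeeping: on a typical spanning vector $u\otimes e^\lambda$ with $\lambda\in\Z\al$, the group-algebra rules \eqref{gp alg deg} give $\pi_\Omega(\bZ^+(z))(u\otimes e^\lambda) = z^{-(\al,\lambda)/2}(A(z)u)\otimes e^{\al+\lambda}$ and $\pi_\Omega(\bZ^-(z))(u\otimes e^\lambda) = z^{(\al,\lambda)/2}(A^*(z)u)\otimes e^{-\al+\lambda}$. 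Composing two such operators produces an extra integer power of $z$ or $w$ coming from $(\al,\pm\al) = \pm2$, together with a scalar factor $(w/z)^{(\al,\lambda)/2}$; isolating these factors is exactly what links the generalized-commutator exponents $(\phi_1,\phi_2)/(-2)$ from \eqref{GenComDef} to the fermionic modes.

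For the relation \eqref{plusplus} I would treat $\gencom{\bZ^+(z)}{\bZ^+(w)}$, the case $\gencom{\bZ^-(z)}{\bZ^-(w)}$ being identical with $A^*$ in place of $A$. Here $(\al,\al)/(-2) = -1$, so the two ordered products are weighted by $(1-w/z)^{-1}$ and $(1-z/w)^{-1}$. Since the contraction $\underbrace{A(z)A(w)}$ vanishes by \eqref{aastarcontract}, we have $A(z)A(w) = \,:\!A(z)A(w)\!:\, = -A(w)A(z)$ by \eqref{ARelation2}. Using this antisymmetry to rewrite the second term and expanding $(1-w/z)^{-1}$, $(1-z/w)^{-1}$ in their respective domains, the two contributions recombine into a single expression proportional to $z^{-1}\dirac\cdot\,:\!A(z)A(w)\!:$. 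Because $\dirac$ identifies $w$ with $z$ and $:\!A(w)A(w)\!:\,=0$, this vanishes, giving \eqref{plusplus}.

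For \eqref{plusminus} the exponent is $(\al,-\al)/(-2) = 1$, so the ordered products carry weights $(1-w/z)$ and $(1-z/w)$. Computing both orderings on $u\otimes e^\lambda$ and factoring out the common $(w/z)^{(\al,\lambda)/2}\otimes e^\lambda$, the generalized commutator collapses to $(w/z)^{(\al,\lambda)/2}(z-w)\anticom{A(z)}{A^*(w)}$, the cross-terms assembling into the genuine anticommutator of fields. Evaluating $\anticom{A(z)}{A^*(w)}$ via the mode relation \eqref{ARelation1} and reindexing, a short telescoping computation yields $(z-w)\anticom{A(z)}{A^*(w)} = -2w\partial_w\dirac$.

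The main obstacle, and the step that actually produces the $H(0)$ term, is the interplay between the half-integer power $(w/z)^{(\al,\lambda)/2}$ and the derivative of the formal delta function. Since $\lambda\in\Z\al$ forces $(\al,\lambda)\in 2\Z$, the exponent $j := (\al,\lambda)/2$ is an integer, and one has the shift identity $(w/z)^{j}\,w\partial_w\dirac = w\partial_w\dirac - j\,\dirac$, obtained by reindexing $\dirac = \sum_n (w/z)^n$. Applying this to the prefactor above turns $-2(w/z)^{j}w\partial_w\dirac$ into $(\al,\lambda)\dirac - 2w\partial_w\dirac$. Finally, identifying $\al(0)$ with $H(0)$ shows that $H(0)$ acts on $e^\lambda$ as the scalar $(\al,\lambda)$, so this is exactly $\bigl(H(0)\dirac - 2w\partial_w\dirac\bigr)$ acting on $u\otimes e^\lambda$, which is \eqref{plusminus}. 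As these relations hold on each $u\otimes e^\lambda$, and such vectors span $\Omega(V)$, the map $\pi_\Omega$ respects all defining relations of $\Zalg$ and is therefore a representation.
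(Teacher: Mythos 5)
Your proof is correct and follows essentially the same route as the paper's: a direct verification of \eqref{plusplus} and \eqref{plusminus}, using the vanishing of the $A$--$A$ contraction (so the $\pm\pm$ generalized commutator reduces to $:\!A(z)A(w)\!:$ times $z^{-1}\dirac$, which vanishes) and, for the $+-$ relation, the reduction to $-2w\partial_w\dirac$ times the group-algebra prefactor, from which $H(0)\dirac$ is extracted. The only cosmetic difference is that you evaluate on spanning vectors $u\otimes e^\lambda$, compute $(z-w)\anticom{A(z)}{A^*(w)}$ directly from the mode relations, and use the shift identity $(w/z)^{j}w\partial_w\dirac = w\partial_w\dirac - j\dirac$, whereas the paper works at the operator level with the contraction $\frac{-2zw}{(z-w)^3}$ and the product rule for $\partial_w$; the content is identical.
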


\begin{proof}
	It suffices to show that the map $\pi_\Omega$ preserves the relations \eqref{plusplus} and \eqref{plusminus}.
	
	First, by  \eqref{aastarcontract} we have  $\pi_\Omega(\bZ^+(z))\pi_\Omega(\bZ^+(w))
		=\	:\!A(z)A(w)\!: e^{2\al} z^{-1}(zw)^{-\frac{\al(0)}{2}}$. Hence
\begin{align*}\gencom{\pi_\Omega(\bZ^+(z))}{\pi_\Omega(\bZ^+(w))}
				&=\	:\!A(z)A(w)\!:e^{2\al}(zw)^{-\frac{\al(0)}{2}}w^{-1}\dirac	\\
				&=	0 = \pi_\Omega\lrpar{\gencom{\bZ^+(z)}{\bZ^+(w)}}.
\end{align*}

Similarly, $\gencom{\pi_\Omega(\bZ^-(z))}{\pi_\Omega(\bZ^-(w))}
	=	0	= \pi_\Omega\lrpar{\gencom{\bZ^-(z)}{\bZ^-(w)}}$.

Now using \eqref{aastarcontract}  we have,

\noindent$\ds\pi_\Omega(\bZ^+(z))\pi_\Omega(\bZ^-(w))	
				=	A(z)e^\al z^{\frac{-\al}{2}}A^*(w)e^{-\al} w^{\frac{\al(0)}{2}}
				=	\lrpar{:\!A(z)A^*(w)\!: -	\frac{2zw}{(z-w)^3}}z
					\lrpar{\frac{w}{z}}^\frac{\al}{2}.$
		
Hence,
\begin{align*}
\gencom{\pi_\Omega(\bZ^+(z))}{\pi_\Omega(\bZ^-(w))}
				&=\ :\!A(z)A^*(w)\!:\lrpar{(z-w)+(w-z)} \lrpar{\frac{w}{z}}^\frac{\al}{2}	\\
				&= -2\lrpar{\frac{zw}{(z-w)^2} - \frac{zw}{(w-z)^2}}
					\lrpar{\frac{w}{z}}^\frac{\al}{2}	\\	
				&=	-2w\partial_w\lrbrack{\dirac}\lrpar{\frac{w}{z}}^\frac{\al}{2}	\\	
				&=	2w\partial_w\lrbrack{\lrpar{\frac{w}{z}}^\frac{\al}{2}}\dirac	
				-2w\partial_w\lrbrack{\lrpar{\frac{w}{z}}^\frac{\al}{2}\dirac}	\\
				&=	H(0)\dirac	-	2w\partial_w\dirac	
					= \pi_\Omega\lrpar{\gencom{\bZ^+(z)}{\bZ^-(w)}}.
\end{align*}

Thus, $\pi_\Omega$ is a representation of the algebra $\Zalg$ on $\Omega(V)$.
\end{proof}

\begin{rem}
	We note that \thmref{Zalg Representation} gives another proof that $\pi$ is a representation
of $\slhat$ on $V$ by the general theory of $\bZ$-algebras. 
\end{rem}

\begin{rem}\label{character of Omega V}
	It follows from Corollary \ref{character of V} and \thmref{Zalg Representation} that
	$$\dim_q(\Omega(V)) =
			\prod\limits_{m\ge0}\lrpar{1+q^m}^2\sum\limits_{p\in\Z}q^{\frac{p^2}{2}}.$$
\end{rem}
\vspace{5pt}
\begin{center}
Acknowledgements
\end{center}

We thank James Lepowsky for valuable comments on an earlier version
of this paper. NJ acknowledges support from
Max-Planck Institut f\"ur Mathematik, Simons Foundation grant 198129, and NSFC grants 10728102 and 11271138
during this work.
KCM acknowledges the partial support through Simons Foundation
grant 208092 and NSA grant H98230-12-1-0248.

\bibliographystyle{elsarticle-num}

\end{document}